\newtheorem{theorem}{Theorem}[section]
\newtheorem{lemma}[theorem]{Lemma}
\newtheorem{proposition}[theorem]{Proposition}
\newtheorem{corollary}[theorem]{Corollary}
\theoremstyle{definition}
\numberwithin{equation}{section}
 \def\buildrel#1_#2^#3{\mathrel{\mathop{\kern 0pt#1}\limits_{#2}^{#3}}}
  \def\K{{ \mathbb K}}
    \def\R{{ \mathbb R}}
 \def\C{{ \mathbb C}}
 \def\N{{ \mathbb N}}
 \def\e{\varepsilon}
   \def\bs{\boldsymbol}
 \def\inter{\cap}
 \def\ov{\overline}
  \def\ss{\subseteq}
 \def\emp{\emptyset}
\begin{document}
\setcounter{page}{1}

\title[Logarithms and exponentials in Banach algebras]
{Logarithms and exponentials in Banach algebras}

\author[R. Mortini, R. Rupp]{Raymond Mortini$^1$$^{*}$ and Rudolf Rupp$^2$}

\address{$^{1}$ Universit\'{e} de Lorraine,
 D\'{e}partement de Math\'{e}matiques et  
Institut \'Elie Cartan de Lorraine,  UMR 7502,
 Ile du Saulcy,
 F-57045 Metz, France}
\email{\textcolor[rgb]{0.00,0.00,0.84}{raymond.mortini@univ-lorraine.fr}}

\address{$^{2}$  Fakult\"at f\"ur Angewandte Mathematik, Physik  und Allgemeinwissenschaften,
 TH-N\"urnberg,
 Kesslerplatz 12, D-90489 N\"urnberg, Germany}
\email{\textcolor[rgb]{0.00,0.00,0.84}{Rudolf.Rupp@th-nuernberg.de}}

\subjclass[2010]{Primary 46H05, Secondary 46J05, 15A16.}

\keywords{Real and complex Banach algebras; logarithms; exponentials; matrices}

\date{Received: xxxxxx; Revised: yyyyyy; Accepted: zzzzzz.
\newline \indent $^{*}$ Corresponding author}

\begin{abstract}
Let $A$  be a complex Banach algebra.  If the   spectrum of an invertible element $a\in A$  
does not separate the plane, then $a$ admits a logarithm. We present two elementary 
proofs of this classical result 
 which are independent of the holomorphic functional calculus.   We also discuss
the case of   real Banach algebras.  As applications, we  obtain simple proofs  that every 
invertible matrix over $\C$ has a logarithm  and that every real matrix 
$M$ in $M_n(\R)$ with $\det M>0$  is a product of two  real
exponential matrices. 
\end{abstract} \maketitle

 \centerline {\small\the\day.\the \month.\the\year} \medskip

\section{Complex Banach algebras}
  The purpose of this note is to show  that the  following important  result in the realm of 
  Banach algebras   can also be proven by
 elementary methods which are independent of the holomorphic functional calculus.
  For the classical  proof, see for  instance
  \cite[Theorem 10.30, p. 264] {rud}.
  
   \begin{theorem}\label{expoinba}
Let $A$ be a unital complex Banach algebra (not necessarily commutative) and let $a\in A$.
Suppose that $0$ belongs to the unbounded connected component of $\C\setminus \sigma_A( a)$,
where  $\sigma_A( a)$ is the spectrum
of $a$. Then there exists $b\in A$ such that $e^b=a$.
\end{theorem}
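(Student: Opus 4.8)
The plan is to reduce the statement to a \emph{commutative} closed subalgebra of $A$ and then to manufacture a logarithm of $a$ by walking along a path in $\C\setminus\sigma_A(a)$, assembling logarithms of many nearly trivial factors into a single one. A commutative ambient algebra is indispensable here: in a general Banach algebra one has $e^xe^y\neq e^{x+y}$, so a telescoping product of exponentials cannot be collapsed to one exponential unless all the exponents commute.

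Since $0\notin\sigma_A(a)$, the element $a$ is invertible. The spectrum is compact and $\sigma_A(a)\subseteq\{|\lambda|\le\|a\|\}$, so $\{|\lambda|>\|a\|\}$ lies in the unbounded component $U$ of $\C\setminus\sigma_A(a)$, and by hypothesis $0\in U$ as well. Being open and connected, $U$ is polygonally path connected, so I fix $R>\|a\|$ and a continuous path $\gamma\colon[0,1]\to U$ with $\gamma(0)=R$ and $\gamma(1)=0$. For every $t$ the element $a-\gamma(t)$ is invertible in $A$, and $t\mapsto(a-\gamma(t))^{-1}$ is continuous on the compact interval $[0,1]$, hence bounded, say by $M$. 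Now let $B$ be the smallest closed subalgebra of $A$ containing $1$, $a$ and all resolvents $(a-\lambda)^{-1}$ with $\lambda\in\C\setminus\sigma_A(a)$. Each such resolvent commutes with $a$, and hence with every other such resolvent, so this generating set is commutative and $B$ is a \emph{commutative} unital Banach algebra; moreover $a$ and each $a-\gamma(t)$ are invertible \emph{inside} $B$.

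To build the logarithm, use the uniform continuity of $\gamma$ to choose a partition $0=t_0<t_1<\dots<t_n=1$ with $M\,|\gamma(t_k)-\gamma(t_{k-1})|<1$ for all $k$. Setting $u_k:=(a-\gamma(t_k))(a-\gamma(t_{k-1}))^{-1}$ one has
\[
\|u_k-1\|=\bigl\|(\gamma(t_{k-1})-\gamma(t_k))\,(a-\gamma(t_{k-1}))^{-1}\bigr\|\le M\,|\gamma(t_k)-\gamma(t_{k-1})|<1,
\]
so $u_k=e^{c_k}$ with $c_k:=\sum_{m\ge1}\frac{(-1)^{m-1}}{m}(u_k-1)^m\in B$. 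For the base point, $a-\gamma(0)=a-R=-R\,(1-a/R)$ with $\|a/R\|<1$, hence $a-R=e^{b_0}$ where $b_0:=(\ln R+i\pi)\,1-\sum_{m\ge1}\frac{1}{m}(a/R)^m\in B$. Define $b_k:=c_k+b_{k-1}\in B$ recursively. Since $B$ is commutative,
\[
e^{b_k}=e^{c_k}e^{b_{k-1}}=u_k\,(a-\gamma(t_{k-1}))=a-\gamma(t_k)\qquad(k=1,\dots,n),
\]
and in particular $e^{b_n}=a-\gamma(1)=a$. Thus $b:=b_n\in B\subseteq A$ satisfies $e^b=a$.

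The only genuine obstacle is the one already flagged: the factors $e^{c_k}$ must not be combined inside $A$, and the fix is to enlarge the algebra generated by $1$ and $a$ by the resolvents of $a$ so as to land in a commutative algebra in which every $a-\gamma(t)$ remains invertible. Equivalently, the last step says that in a commutative unital Banach algebra the range of $\exp$ is precisely the connected component of $1$ in the group of invertibles (it is an open, hence clopen, subgroup, and it is connected), and $a$ is joined to the ``easy'' element $a-R$ by the path $t\mapsto a-\gamma(t)$.
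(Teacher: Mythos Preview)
Your proof is correct and follows the same overall strategy as the paper's two proofs: pass to a commutative closed subalgebra, connect $a$ to an element $a-R$ with $R>\|a\|$ (which has an obvious logarithm by Lemma~\ref{expo}) via a path $t\mapsto a-\gamma(t)$ of invertibles, and propagate the logarithm along the path using the series for $\log(1+x)$ near $1$.

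Two technical choices distinguish your argument from the paper's. First, for the commutative subalgebra the paper takes $B=[a]_{\mathrm{alg}}$, the closure of polynomials in $a$; you instead adjoin all resolvents $(a-\lambda)^{-1}$, $\lambda\notin\sigma_A(a)$. Your choice guarantees outright that each $a-\gamma(t)$ is invertible \emph{in} $B$, which makes the telescoping transparent; the paper's choice keeps $B$ smaller but forces the second proof to work in $A^{-1}\cap B$ and to argue that once $\Phi(t)=e^{h}$ with $h\in B$ the inverse $e^{-h}$ lands back in $B$. Second, for the propagation the paper's second proof uses a clopen argument on $\{t:\Phi(t)\in\exp B\}$, whereas you use a uniform-continuity partition and an explicit telescoping product $e^{b_n}=u_n\cdots u_1\,e^{b_0}$. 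These are equivalent in content; your version is perhaps the more concrete of the two, while the paper's clopen argument generalises more readily (as used in Theorem~\ref{real-logs}). Your closing remark that $\exp B$ is the identity component of $B^{-1}$ is precisely the fact invoked in the paper's first proof.
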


  We shall only use the following standard result which can be found in almost all
 monographs on functional analysis and Banach algebras.
   \begin{lemma}\label{expo}
      Let $A=(A,||\cdot||)$  be a unital, real or complex,  but not necessarily commutative
      Banach algebra, the norm $||\cdot||$ being submultiplicative and let
      $$\exp A=\{e^a: a\in A\}.$$
   Suppose that  $f\in A$ satisfies
       $||{\bs 1}-f||<1$. Then $f\in \exp A$.  Moreover, 
     if  $L$ is given by
      $$L=-\sum_{j=1}^\infty \frac{1}{j}\big({\bs 1}-f\big)^{j},$$
    then   the series is unconditionnally/absolutely  convergent in $A$ and $e^L=f$.
    Finally, if for some $g\in A$,  $M>||g||$, then  
    $$M\cdot  {\bs 1}-g\in \exp A  \;\footnote{ Note that in the real case we do not necessarily have
   that the opposite $g-M\cdot \bs 1$ belongs to $\exp A$.},$$
  and in the case of a complex algebra,  
     $$g-\lambda\cdot{\bs 1}\in \exp A$$
      for every $\lambda\in \C$ with $|\lambda|>||g||$.
      \end{lemma}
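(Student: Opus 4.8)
The plan is to dispatch the four assertions in order, the first two carrying the real content. Set $x=\bs 1-f$, so that $\|x\|<1$. First I would invoke submultiplicativity: $\|x^{j}\|\le\|x\|^{j}$, whence $\sum_{j\ge 1}\big\|\tfrac1j x^{j}\big\|\le\sum_{j\ge 1}\|x\|^{j}<\infty$. Thus the series defining $L$ converges absolutely, and in a Banach space absolute convergence forces unconditional convergence. In particular $L$ lies in the closed subalgebra $B$ of $A$ generated by $\bs 1$ and $x$, which is commutative, so the identity $e^{L}=f$ may be verified inside $B$.

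For $e^{L}=f$ I would expand $e^{L}=\sum_{n\ge 0}\tfrac1{n!}L^{n}$ and substitute the series for $L$, obtaining a multiple series all of whose terms are scalar multiples of powers of $x$. The scalar identity $\exp\!\big(-\sum_{j\ge 1}\tfrac{z^{j}}{j}\big)=\exp\big(\log(1-z)\big)=1-z$, valid for $|z|<1$ as an identity of convergent power series, then determines the outcome after collecting powers of $x$: the coefficient of $x^{0}$ is $1$, of $x^{1}$ is $-1$, and of $x^{k}$ for $k\ge 2$ is $0$. The rearrangements are legitimate because, replacing $x$ by the scalar $t:=\|x\|\in[0,1)$ and each coefficient by its modulus, one gets the convergent expansion of $\exp\!\big(\sum_{j\ge 1}\tfrac{t^{j}}{j}\big)=\tfrac1{1-t}$; this majorant shows the double series converges absolutely in $A$ and may be summed in any order. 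Hence $e^{L}=\bs 1-x=f$. Alternatively one can avoid the bookkeeping: on $B$ the curve $t\mapsto L(t):=-\sum_{j\ge 1}\tfrac{t^{j}}{j}x^{j}$ is differentiable with $L'(t)=-x(\bs 1-tx)^{-1}$, and since $L(t)$ and $L'(t)$ commute, $\psi(t):=e^{-L(t)}(\bs 1-tx)$ satisfies $\psi'\equiv 0$ and $\psi(0)=\bs 1$, so $\psi(1)=\bs 1$, that is, $e^{L}=\bs 1-x=f$.

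For the third assertion, write $M\cdot\bs 1-g=M\big(\bs 1-\tfrac1M g\big)$. Since $M>\|g\|\ge 0$ we have $M>0$ and $\big\|\tfrac1M g\big\|<1$, so by the above $\bs 1-\tfrac1M g=e^{L_{0}}$ for some $L_{0}\in A$; also $M\cdot\bs 1=e^{(\ln M)\bs 1}$ with the ordinary real logarithm. Because $(\ln M)\bs 1$ is central it commutes with $L_{0}$, so $M\cdot\bs 1-g=e^{(\ln M)\bs 1}e^{L_{0}}=e^{(\ln M)\bs 1+L_{0}}\in\exp A$. For the last assertion, work in a complex $A$ with $|\lambda|>\|g\|$; then $\lambda\ne 0$ and $g-\lambda\cdot\bs 1=-\lambda\big(\bs 1-\tfrac1\lambda g\big)$ with $\big\|\tfrac1\lambda g\big\|<1$, so $\bs 1-\tfrac1\lambda g=e^{L_{1}}$; picking $\mu\in\C$ with $e^{\mu}=-\lambda$ and using that $\mu\bs 1$ is central yields $g-\lambda\cdot\bs 1=e^{\mu\bs 1}e^{L_{1}}=e^{\mu\bs 1+L_{1}}\in\exp A$. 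This also explains the footnote: in a real algebra $-M<0$ need not possess a real logarithm (already in $A=\R$), so the same device does not produce $g-M\cdot\bs 1$.

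The only genuinely delicate point is $e^{L}=f$: one must certify that the interchange of the two summations is licit, and this is exactly where the reduction to the commutative subalgebra $B$ together with the real majorant $t=\|x\|<1$ does the work; the remaining steps are formal manipulations with central elements and the functional equation of $\exp$.
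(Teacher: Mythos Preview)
Your proof is correct and, for the parts the paper actually proves (only the last two assertions), essentially identical to the paper's argument: factor out the scalar $M$ (respectively $-\lambda$), apply the first assertion to the remaining factor, and combine exponentials using that scalar multiples of $\bs 1$ are central. The paper treats the first two assertions as standard and does not prove them, so your careful verification of absolute convergence and of $e^{L}=f$ (via the scalar majorant or the ODE alternative) simply supplies what the paper omits.
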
                                                
\begin{proof}
 For the reader's convenience, we  prove the latter assertion.  Observe that 
 $$ h:=M\cdot  {\bs 1}-g= M\big(\bs 1-(g/M)\big).$$
 Now apply the first assertion to  $f:=\bs 1- (g/M)$. Then $f=e^a$ for some $a\in A$. Let
  $m:=\log M$ be  a real logarithm of $M$.
 Since $m\cdot{\bs 1}$ commutes with each
 $a\in A$, we deduce that 
 $$h= e^{m\cdot{\bs 1}}e^a= e^{m\cdot{\bs 1}+a}\in \exp A.$$
 If $\K=\C$, we let  $m$ be a fixed complex logarithm of $\lambda$. Then
 $$g-\lambda\cdot{\bs 1}=-(\lambda \cdot{\bs1}-g)= -e^{m\cdot {\bs 1}+a}=
 e^{(i\pi+m)\cdot \bs 1 +a}\in \exp A. \eqno\qedhere$$
\end{proof}
 \bigskip

{\bf First proof of Theorem \ref{expoinba}.}

Consider the closed subalgebra $B:=[a]_{{\rm alg}}$ generated by $a$ in $A$.
Then $B$ is a commutative complex Banach algebra.   Note that 
 $\sigma_B(a)$ is contained in the polynomial convex hull of $\sigma_A(a)$
  (see \cite[Theorem 10.18 (b), p. 256]{rud})
 \footnote{ We actually have that $\sigma_B(a)=\widehat{\sigma_A(a)}$.}. In particular, 
$0$ stays  in the unbounded component of $\C\setminus \sigma_B(a)$.
Hence, there is an arc $J$ in $\C\setminus \sigma_B(a)$ given by the parametrization  
$\varphi: [0,1[\to \C\setminus  \sigma_B(a)$
 joining $0$ to infinity.  Since $\Phi(t):=a -\varphi(t)\cdot \bs 1\in B^{-1}$ for every $t\in [0,1[$,
 and $\Phi(t)\in \exp B$  for all $t$ sufficiently close to $1$ (Lemma \ref{expo}),
  the fact that
 $\exp B$ is a maximal connected subset of $B^{-1}$ now  implies that $\Phi(t)\in \exp B$ for every $t$. In particular,  $\Phi(0)=a\in \exp B$.
 Because $B\ss A$, we have found $b\in A$ such that $e^b=a$.
\hfill $\square$
 \medskip
 
 Yet an even more elementary proof can be given along the following lines. 
 It  avoids the necessity to know that
 the complement of the spectrum   of $a$ with respect to the algebra $B=[a]_{{\rm alg}}$ has the same unbounded  component  as the   complement of the spectrum   of $a$ with respect to the `big'  algebra $A$.  This second proof only makes use of Lemma \ref{expo}  and a simple connectedness argument.
It is suitable for an undergraduate course in analysis and linear algebra. \bigskip
 
{\bf Second proof of Theorem \ref{expoinba}.}

Let  $J'$ be an  arc in $\C\setminus \sigma_A(a)$ joining $0$ to infinity  and 
 which is given by the parametrization  
$\varphi: [0,1[\to \C\setminus  \sigma_A(a)$. Let us emphasize that in contrast to the first proof, 
 the arc lives in $\C\setminus \sigma_A(a)$. As above,  let 
 $B:=[a]_{{\rm alg}}$ be the closed subalgebra of $A$ generated by $a$.
Associate with $\varphi$ the path $\Phi:[0,1[\to A^{-1}\inter B$
given by 
$$\Phi(t):=a -\varphi(t)\cdot \bs 1,$$
and consider the set
$$\mathcal O=\{t\in [0,1[:  \Phi(t)\in \exp B\}.$$
Since  by Lemma \ref{expo},  $\Phi(t)\in \exp B$  for $t$ close to $1$, $\mathcal O\not=\emp$. \\

{\bf Claim 1} $\mathcal O$ is open in $[0,1[$. 

To see this, let  $t_1\in \mathcal O$. Then $\Phi(t_1)=e^h$ for some $h\in B$. Let 
$0<\e< ||e^{-h}||^{-1}$ and choose a neighborhood $I_\delta\ss [0,1[$ of $t_1$ such that
$$\mbox{$ |\varphi(t)-\varphi(t_1)|<\e$ for $t\in I_\delta$}.$$
Then, for these $t$,
$$||\Phi(t)-\Phi(t_1)||= ||\varphi(t)\cdot \bs 1-\varphi(t_1)\cdot \bs 1||= |\varphi(t)-\varphi(t_1)|<\e.$$
Hence
$$|| e^{-h}\Phi(t)-{\bs 1}||=||e^{-h} (\Phi(t)-e^h)||\leq ||e^{-h}||\; \e <1.$$
By Lemma \ref{expo}, 
$e^{-h}\Phi(t)=e^g$ for some $g\in B$; choose for example
$$g=\sum_{n=1}^\infty (-1)^{n+1} \frac{1}{n} \left( e^{-h}\Phi(t)-\bs 1\right)^n.$$
Since $B$ is commutative, $\Phi(t)=e^h e^g=e^{h+g}\in \exp B$.
Thus $I_\delta\ss \mathcal O$.\\

{\bf Claim 2}  $\mathcal O$ is closed in $[0,1[$.

Let $t_n\in \mathcal O$ converge to some $s\in [0,1[$. By definition of $\mathcal O$, 
$\Phi(t_n)\in \exp B$; say  $\Phi(t_n)=e^{h_n}$ with $h_n\in B$. 
As in the previous paragraph, 
$$||\Phi(t_n)-\Phi(s)||\leq |\varphi(t_n)-\varphi(s)|\to 0.$$
 Since inversion is a continuous  operation in $A^{-1}$ and  $\Phi([0,1[)\ss A^{-1}$,   we conclude from
$||\Phi(t_n)-\Phi(s)||\to 0$, that $||\Phi(t_n)^{-1}-\Phi(s)^{-1}||\to 0$. In particular, for all $n$,
$$||e^{-h_n}||=||\Phi(t_n)^{-1}|| \leq M<\infty.$$ 
Now let $0<\e<1/M$.  Choose $n_0$ so that $|\varphi(t_n)-\varphi(s)|<\e$ for $n\geq n_0$.
Then for all $n\geq n_0$,
$$||e^{-h_n}\Phi(s) -{\bs 1}||= ||e^{-h_n}(\Phi(s)-e^{h_n})||\leq M \; ||\Phi(s)-e^{h_n}||$$
$$\leq M\;|\varphi(s)-\varphi(t_n)|\leq  M\; \e <1.$$
By Lemma \ref{expo} 
$e^{-h_{n_0}}\Phi(s)=e^k$ for some $k\in B$; choose for example
$$k=\sum_{n=1}^\infty (-1)^{n+1} \frac{1}{n} \left( e^{-h_{n_0}}\Phi(s)-\bs 1\right)^n.$$
Since $B$ is commutative,  $$\Phi(s)=e^{h_{n_0}}\,e^k= e^{h_{n_0}+k}.$$
Thus $s\in \mathcal O$ and so $\mathcal O$ is closed. 
Due to connectedness, we deduce that $\mathcal O=[0,1[$. Hence $a=\Phi(0)\in \exp B$.
{\phantom.}\hfill $\square$
\bigskip

Thus, we also obtain an easy proof (without the functional calculus or  the use of the Jordan decomposition) of the following well-known result:
\begin{corollary}
Let $M_n(\C)$ be the algebra of $n\times n$-matrices over $\C$. Then each invertible
 matrix $M$ is an exponential matrix.
\end{corollary}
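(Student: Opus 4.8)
The plan is to deduce this directly from Theorem~\ref{expoinba}, viewing $M_n(\C)$ as a unital complex Banach algebra $A$ (with, say, the operator norm, which is submultiplicative and makes $A$ a Banach algebra; the unit $\bs 1$ is the identity matrix $I$). The only hypothesis to verify is that $0$ lies in the unbounded connected component of $\C\setminus\sigma_A(M)$. But for a matrix $M\in M_n(\C)$ the spectrum $\sigma_A(M)$ is exactly the set of eigenvalues of $M$, hence a \emph{finite} set, namely the zero set of the characteristic polynomial $\det(\lambda I - M)$. A finite subset of $\C$ has connected complement, so $\C\setminus\sigma_A(M)$ has a single connected component, which is therefore the unbounded one. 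Since $M$ is invertible, $0\notin\sigma_A(M)$, so $0$ lies in that (unbounded) component. Theorem~\ref{expoinba} then yields $b\in M_n(\C)$ with $e^b=M$, i.e.\ $M$ is an exponential matrix.

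The key steps, in order, are: (1) identify $M_n(\C)$ as a unital complex Banach algebra and recall that the spectrum of $M$ in this algebra coincides with the eigenvalue set; (2) observe that this set is finite, hence has connected (thus unbounded) complement; (3) use invertibility of $M$ to place $0$ in that complement; (4) invoke Theorem~\ref{expoinba}.

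I do not expect any genuine obstacle here: once Theorem~\ref{expoinba} is available, the corollary is essentially a one-line spectral observation, the point being precisely that finiteness of the spectrum makes the topological hypothesis automatic. The only thing worth stating carefully is the identification $\sigma_{M_n(\C)}(M)=\{\lambda\in\C:\det(\lambda I-M)=0\}$, which follows because $\lambda I-M$ fails to be invertible in $M_n(\C)$ exactly when its determinant vanishes.
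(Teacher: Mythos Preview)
Your proposal is correct and follows exactly the paper's approach: the paper's proof is the one-line observation that the spectrum of $M$ is finite, so the hypotheses of Theorem~\ref{expoinba} are satisfied. You have simply spelled out the details (finite spectrum $\Rightarrow$ connected complement, invertibility $\Rightarrow 0\notin\sigma(M)$) that the paper leaves implicit.
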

\begin{proof}
Since the spectrum  of $M$ is finite, we see that the hypotheses of the previous theorem are all satisfied.
\end{proof}

 \section{Real Banach algebras}

Let $\mathcal R$ be  a real Banach algebra with unit element ${\bs 1}$. If $x\in \mathcal R$, then its (real-symmetric) spectrum
$\sigma^*_{\mathcal R}(x)$ is   defined as
\begin{eqnarray*}
\sigma^*_{\mathcal R}(x):&=&\mbox{$\{\lambda \in \C: (a-\lambda \cdot{\bs 1})\, (a-\ov\lambda \cdot{\bs 1})$ \;not invertible in $\mathcal R\}$}\\
&=&\mbox{$\{\lambda \in \C:a^2-2({\rm Re}\,\lambda)\, a +|\lambda|^2\,\cdot{\bs 1} $ \;not invertible in $\mathcal R\}$}.
\end{eqnarray*}
Note that $\sigma^*_{\mathcal R}(x)$ is a non-void compact set in $\C$ (see for example \cite{kuli}).
This definition, going back to Kaplansky, can be motivated by looking at the characteristic
polynomial of a real matrix $M$. Here the zeros of $$p_n(\lambda):=\det ( M-\lambda I_n)$$
 are either real or
appear in pairs $(\lambda,\ov\lambda)$. Hence $\det(M-\ov\lambda I_n)$ 
has the same zeros.
Since $M-\lambda I_n$ is  not a real matrix, one considers $N:=(M-\lambda I_n)(M-\ov \lambda I_n)$,
which has the property that $\det N=p_n(\lambda)^2$.  What we have gained is that
$N=M^2-2({\rm Re}\,\lambda)\, M +|\lambda|^2\; I_n $ is a real matrix again. Here is now the companion
result to Theorem  \ref{expoinba}.

\begin{theorem}\label{real-logs}
Let $\mathcal R$ be a unital real Banach algebra (not necessarily commutative) and 
let $x\in \mathcal R$.
\begin{enumerate}
\item [(1)]
Suppose that $0$ belongs to the unbounded connected component of 
$\C\setminus \sigma^*_{\mathcal R}(x)$. 
Then there exists $r\in \mathcal R$ such that $e^r=x^2$.
\item[(2)] 
If $]{-\infty}, 0]$ belongs  the unbounded connected component of 
$\C\setminus \sigma^*_{\mathcal R}(x)$, 
then there exists $v\in \mathcal R$ such that $e^v=x$.

\end{enumerate}
\end{theorem}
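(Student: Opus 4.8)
The strategy mirrors the second proof of Theorem \ref{expoinba}, but now run over a path in the real-symmetric spectral complement. For part (1), since $0$ lies in the unbounded component of $\C\setminus\sigma^*_{\mathcal R}(x)$, choose an arc $\varphi\colon[0,1[\to\C\setminus\sigma^*_{\mathcal R}(x)$ joining $0$ to infinity. Following the motivation for Kaplansky's spectrum, associate with it the \emph{real} path
$$\Psi(t):=x^2-2\big({\rm Re}\,\varphi(t)\big)\,x+|\varphi(t)|^2\cdot{\bs 1},$$
which lies in $\mathcal R^{-1}$ for all $t$ precisely because $\varphi(t)\notin\sigma^*_{\mathcal R}(x)$; note $\Psi(0)=x^2$. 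Let $B:=[x]_{\rm alg}$, the closed (commutative) real subalgebra generated by $x$; then $\Psi(t)\in B^{-1}\cap\mathcal R^{-1}$ for all $t$. The plan is to show $\mathcal O:=\{t\in[0,1[:\Psi(t)\in\exp B\}$ is nonempty, open and closed in $[0,1[$, hence equals $[0,1[$, giving $x^2=\Psi(0)\in\exp B\subseteq\exp\mathcal R$. Nonemptiness for $t$ near $1$ follows from Lemma \ref{expo} since $\Psi(t)=|\varphi(t)|^2\big({\bs 1}-(2{\rm Re}\,\varphi(t)/|\varphi(t)|^2)x+\cdots\big)$ has the form $M\cdot{\bs 1}-g$ with $M=|\varphi(t)|^2\to\infty$ while $\|2({\rm Re}\,\varphi(t))x-\| \to$ stays controlled — more simply, $\Psi(t)/|\varphi(t)|^2\to{\bs 1}$ as $t\to1$.

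Openness and closedness of $\mathcal O$ go through verbatim as in the second proof of Theorem \ref{expoinba}: the only ingredients used there were (i) continuity of $t\mapsto\Psi(t)$ in norm, (ii) continuity of inversion on $\mathcal R^{-1}$ together with $\Psi([0,1[)\subseteq\mathcal R^{-1}$ (to get a uniform bound $\|\Psi(t_n)^{-1}\|\le M$ along a convergent sequence), and (iii) Lemma \ref{expo} plus commutativity of $B$ to absorb a small perturbation $e^{-h}\Psi(t)=e^g$ into $\Psi(t)=e^{h+g}$. All three survive unchanged in a real Banach algebra — Lemma \ref{expo} is stated for real algebras, and $t\mapsto\Psi(t)$ is plainly norm-continuous since $\varphi$ is. Hence $\mathcal O=[0,1[$ and part (1) is proved.

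For part (2), the hypothesis that the whole ray $]{-\infty},0]$ lies in the unbounded component of $\C\setminus\sigma^*_{\mathcal R}(x)$ lets us upgrade from $x^2$ to $x$. First, $0\in{]{-\infty},0]}$ lies in that unbounded component, so part (1) already gives $r\in\mathcal R$ with $e^r=x^2$. The remaining task is to ``take a square root'' at the level of logarithms: concretely, I would run the same path argument but with $\varphi$ taken to be the real ray $\varphi(t)=-t/(1-t)\in{]{-\infty},0]}$ for $t\in[0,1[$, so that $\varphi(t)$ is real and $\Psi(t)=(x-\varphi(t)\cdot{\bs 1})^2=\big(x+\tfrac{t}{1-t}{\bs 1}\big)^2$; crucially, since $\varphi(t)$ is real, $\Psi(t)$ is the square of the \emph{real} element $x-\varphi(t)\cdot{\bs 1}\in\mathcal R^{-1}$. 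The claim is then that not only $\Psi(t)\in\exp B$ but that one can choose the logarithm so that $x-\varphi(t)\cdot{\bs 1}=e^{w(t)/2}$ with $w(t)\in B$; running connectedness on $\{t:x-\varphi(t)\cdot{\bs 1}\in\exp B\}$ and using that for $t$ near $1$ the element $x-\varphi(t)\cdot{\bs 1}=\tfrac{1}{1-t}\big((1-t)x+t\cdot{\bs 1}\big)$ is of the form $M\cdot{\bs 1}-g$ with $M\to\infty$ (hence in $\exp B$ by Lemma \ref{expo}), one concludes $x=\Psi(0)^{1/2}$-issue aside, $x-\varphi(0)\cdot{\bs 1}=x\in\exp B\subseteq\exp\mathcal R$, i.e. $v\in\mathcal R$ with $e^v=x$.

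The main obstacle is the step in part (2) of making sure the path argument produces a logarithm of $x$ itself and not merely of $x^2$: one must verify that the perturbation lemma (Lemma \ref{expo}) can be applied directly to $\Psi(t)=x-\varphi(t)\cdot{\bs 1}$ rather than to its square, which requires exactly that the ``big'' end of the ray $\varphi(t)\to-\infty$ makes $x-\varphi(t)\cdot{\bs 1}$ of the form $M\cdot{\bs 1}-g$ — this is where the hypothesis on the \emph{entire} ray $]{-\infty},0]$ (not just the point $0$) is used, both to keep $\Psi(t)\in\mathcal R^{-1}$ along the way (note $\varphi(t)\in\sigma^*_{\mathcal R}(x)$ would mean $x-\varphi(t)\cdot{\bs 1}$ non-invertible since $\varphi(t)$ is real) and to reach the regime governed by Lemma \ref{expo} at $t\to1$. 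Once that connectedness set is shown open and closed by the now-standard argument, part (2) follows.
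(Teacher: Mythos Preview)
Your proposal is correct and follows the same route as the paper: for (1) the quadratic path $\Psi(t)=x^2-2(\mathrm{Re}\,\varphi(t))\,x+|\varphi(t)|^2\cdot\mathbf{1}$ together with the open--closed connectedness argument in $B=[x]_{\mathrm{alg}}$, and for (2) the connectedness argument run directly on $t\mapsto x+t\cdot\mathbf{1}$ along the nonnegative real ray (your $t\mapsto x-\varphi(t)\cdot\mathbf{1}$ is just a reparametrization of this). Your exposition of (2) is slightly roundabout---the detour through the square $\Psi(t)=(x-\varphi(t)\cdot\mathbf{1})^2$ and the ``take a square root'' language are unnecessary once you commit to running the argument on $x-\varphi(t)\cdot\mathbf{1}$ itself---but the mathematical content matches the paper's proof.
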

\begin{proof}
(1) We first note that the element
$u:=x^2-2({\rm Re}\,M)\, x +|M|^2\;\cdot{\bs 1} $ is an exponential in
 $\mathcal R$ whenever $M\in \C$
is large enough. In fact, since
$$f:=\frac{u}{|M|^2}= \left(\frac{x}{|M|}\right)^2 - 2\left({\rm Re}\,\frac{M}{|M|}\right)\, \frac{x}{|M|}+\bs 1\in \mathcal R$$
and satisfies $||f-\bs 1||<1$ for $M$ large enough, we deduce from Lemma \ref{expo}
that $f=e^L$ for some $L\in \mathcal R$. Hence $u=e^{2\log |M|\cdot {\bs 1}+L}$ is the desired
representation of $u$ (again because  $2\log |M|\cdot {\bs 1}$ and $L$ commute).

Now the proof works in exactly the same way as before; just
associate to the  arc $\varphi:[0,\infty[\to \C\setminus \sigma^*_{\mathcal R}(x)$ 
which joins $0$ to $\infty$, 
the path $\Phi:[0,\infty[\to \mathcal R^{-1}\inter [x]_{{\rm alg}}$
given by 
$$\Phi(t):=\big(x -\varphi(t)\cdot \bs 1\big)\,\big(x -\ov{\varphi(t)}\cdot \bs 1\big)=
x^2-2({\rm Re}\,\varphi(t))\, x +|\varphi(t)|^2\cdot {\bs 1} .
$$

(2) 
By assumption,  $\Phi(t)=x +t\cdot{\bs 1}\in \mathcal R^{-1}$ for every $t\in [0,\infty[$.
Moreover, for $M>0$ large,  
$$\Phi(M)= x +M\cdot{\bs 1}=M\left(\frac{x}{M} +{\bs 1}\right)$$
has a logarithm by Lemma \ref{expo} applied to $f:=\frac{x}{M} +{\bs 1}$.
The conclusion now follows  using  the previous arguments. 
\end{proof}

\begin{corollary}\label{asquare}

Let $\mathcal R$ be a unital real Banach algebra (not necessarily commutative) and 
let $x\in \mathcal R$. Suppose that $0$ belongs to the unbounded connected component of 
${\C\setminus\sigma^*_{\mathcal R}(x)}$. Then
$$\mbox{$x\in \exp {\mathcal R}$ if and only if $x=y^2$ for some $y\in \mathcal R$.}$$
\end{corollary}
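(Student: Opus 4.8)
The plan is to prove the two implications separately, noting that the forward direction is trivial and all the substance lies in the converse. If $x = e^r$ for some $r \in \mathcal{R}$, then $x = (e^{r/2})^2$, so $x$ is a square; this direction uses nothing about the spectrum. For the converse, suppose $x = y^2$ for some $y \in \mathcal{R}$ and that $0$ lies in the unbounded connected component of $\C \setminus \sigma^*_{\mathcal{R}}(x)$. The idea is to transfer the spectral hypothesis from $x$ to $y$ so that part (1) of Theorem \ref{real-logs} applies to $y$, yielding $e^r = y^2 = x$ for some $r \in \mathcal{R}$.

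The key step is therefore the spectral mapping observation: I would show that if $0$ is in the unbounded component of $\C \setminus \sigma^*_{\mathcal{R}}(x)$ and $x = y^2$, then $0$ is in the unbounded component of $\C \setminus \sigma^*_{\mathcal{R}}(y)$. Concretely, unravelling the definition, $\lambda \in \sigma^*_{\mathcal{R}}(y)$ means $y^2 - 2(\operatorname{Re}\lambda)\, y + |\lambda|^2 \cdot \bs 1$ is not invertible. One wants to relate this to non-invertibility of an expression in $x = y^2$. The cleanest route is to factor: $\mu \cdot \bs 1 - x = \mu \cdot \bs 1 - y^2 = (\sqrt{\mu}\cdot\bs 1 - y)(\sqrt{\mu}\cdot\bs 1 + y)$ for a chosen square root $\sqrt{\mu}$, so that $\mu \notin \sigma_{\mathcal{R}}(x)$ (ordinary spectrum, using that $x$ need not be in a complex algebra—one works with the real-symmetric spectrum throughout) forces both $\pm\sqrt{\mu}$ out of a suitable spectral set for $y$. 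Since squaring maps the plane onto itself as a proper two-to-one map (away from $0$) and sends the unbounded component of a complement onto the unbounded component of the image's complement, a connected unbounded curve in $\C \setminus \sigma^*_{\mathcal{R}}(x)$ from $0$ to $\infty$ lifts (or rather, its preimage contains) an unbounded connected curve in $\C \setminus \sigma^*_{\mathcal{R}}(y)$ from $0$ to $\infty$. This is exactly the hypothesis needed to invoke Theorem \ref{real-logs}(1) for $y$.

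I expect the main obstacle to be the bookkeeping in this spectral transfer, specifically verifying that the preimage under $z \mapsto z^2$ of the unbounded component of $\C \setminus \sigma^*_{\mathcal{R}}(x)$ really does meet $0$ and is connected up to $\infty$ — one must be careful because $z \mapsto z^2$ is not injective, so ``the'' preimage of a component need not be a single component, but its unbounded piece through $0$ does the job. An alternative, perhaps more elementary, route avoids square roots of points altogether: run the homotopy argument of the proof of Theorem \ref{real-logs}(1) directly, using the arc $\varphi \colon [0,\infty[ \to \C \setminus \sigma^*_{\mathcal{R}}(x)$ joining $0$ to $\infty$, but with the path $t \mapsto (y - \psi(t)\cdot\bs 1)(y - \ov{\psi(t)}\cdot\bs 1)$ where $\psi$ is chosen so that $\psi(t)^2 = \varphi(t)$ along a continuous branch; the continuity of the connectedness argument then needs only that this path stays in $\mathcal{R}^{-1}$, which follows from the factorization $\mu \cdot \bs 1 - y^2 = (\psi\cdot\bs1 - y)(\psi\cdot\bs1+y)$ together with $\varphi(t) \notin \sigma_{\mathcal{R}}^*(x)$. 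Either way, once $y \in \exp\mathcal{R}$ is established, $x = y^2 \in \exp\mathcal{R}$ is immediate, completing the proof.
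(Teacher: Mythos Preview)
Your proposal is correct, and in fact your ``alternative route'' is precisely what the paper does. The paper takes a polygonal Jordan arc $\varphi:[0,\infty[\to \C\setminus\sigma^*_{\mathcal R}(x)$, writes it in polar coordinates as $\varphi(t)=r(t)e^{is(t)}$, and sets $\psi(t)=\sqrt{r(t)}\,e^{is(t)/2}$; the spectral mapping identity $\sigma^*_{\mathcal R}(y^2)=q(\sigma^*_{\mathcal R}(y))$ with $q(z)=z^2$ then shows $\psi$ misses $\sigma^*_{\mathcal R}(y)$, so Theorem~\ref{real-logs}(1) applied to $y$ gives $x=y^2\in\exp\mathcal R$ directly. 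Your first, more topological formulation (arguing that the preimage of the unbounded component under $z\mapsto z^2$ has an unbounded piece through $0$) would also work but involves exactly the bookkeeping you anticipate; the explicit square-root lift sidesteps it.

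One small slip: in your last sentence you write ``once $y\in\exp\mathcal R$ is established''---but Theorem~\ref{real-logs}(1) gives $y^2\in\exp\mathcal R$, not $y\in\exp\mathcal R$. You had this right in your first paragraph, so this is presumably just a typo, and it is harmless since $x=y^2$ is what you want anyway.
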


\begin{proof}
If $x=e^b$ for some $b\in \mathcal R$, then  we choose $y=e^{b/2}$. Conversely, if $x=y^2$,
then, by the spectral mapping theorem  in its most primitive version, 
$$\sigma_{\mathcal R}^*(x)=\sigma_{\mathcal R}^*(y^2)=q(\sigma_{\mathcal R}^*(y)),$$ 
where $q(z)=z^2$ (see for example \cite{kuli}).
Let $\varphi:[0,\infty[\to \C\setminus \sigma^*_{\mathcal R}(x)$ be  a polygonial Jordan arc
joining $0$ with $\infty$. We may use polar-coordinates: so $\varphi(t)=r(t)e^{is(t)}$
for some continuous functions $r$ and $s$, $r>0$ on $]0,\infty[$ and $r(0)=0$. Then the arc
$\psi: [0,\infty[ \to \C$ given by $\psi(t):=\sqrt{r(t)} e^{is(t)/2}$ does not meet 
$\sigma_{\mathcal R}^*(y)$,
because otherwise $q(\psi(t_0))=\varphi(t_0)\in \sigma_{\mathcal R}^*(x)$. Hence, by Theorem 
\ref{real-logs} (1), $x=y^2\in \exp \mathcal R$.
\end{proof}

Let us emphasize that $x$ (or $-x$) itself does not necessarily have a real logarithm. 
Just look at the algebra $M_3(\R)$ of $3\times 3$ matrices  over $\R$ and
the diagonal matrix 
$$M=\left(\begin{matrix}  -1&0&0\\ 0&-2&0\\ 0&0& 1
\end{matrix}\right).
$$

Note that $\det M>0$. Suppose that there is $L\in M_3(\R)$ such that $M=e^L$.  At least one eigenvalue  $\lambda$
of $L$ is real. Let $x\in\R^3$  be a corresponding eigenvector.  Then $Lx=\lambda x$
and so $L^nx=\lambda^nx$ for all powers $n$. Hence $Mx=e^Lx=e^\lambda x$. But then the
positive real number $e^\lambda$ must be the eigenvalue $1$ of $M$; that is $\lambda=0$.
Since no other positive real eigenvalue of $M$ exists, the other two eigenvalues  of $L$
must be complex conjugate numbers $\mu$ and $\ov\mu$.  Again, if $y\in \C^3$ is an eigenvector
of $L$ with respect to $\mu$, then $M y=e^\mu y$ implies that $e^\mu \in \{-1,-2\}$. 
But $\ov y$ is the 
conjugate eigenvector of $L$ with respect to $\ov \mu$ and so $M \ov y=e^{\ov\mu}\ov y$.
It can't  be the case, though, that $\{e^\mu,e^{\ov\mu}\}\ss \{-1,2\}$.
Thus we achieved a contradiction. We conclude that $\pm M$ is not  a real exponential.  

It is interesting  to note that the matrix
$$\tilde M=\left(\begin{matrix}  -1&0&0\\ 0&-1&0\\ 0&0& 1
\end{matrix}\right)
$$
does have a logarithm in $M_3(\R)$. In fact, since $\tilde L^2=-\pi^2 \tilde L$, it is
straightforwad to check that
$$\tilde L=\left(\begin{matrix}  0&-\pi&0\\ \pi&0&0\\ 0&0& 0
\end{matrix}\right)
$$
is  a logarithm of $\tilde M$.

An explicit  description of those  matrices in $M_n(\R)$ having a real logarithm
is given in \cite{cu}.  It tells us that $M\in M_n(\R)$ with $\det M>0$ has a logarithm in 
$M_n(\R)$ if and only if every  Jordan block associated with a negative eigenvalue
of $M$ appears an even number of times. 

\section{More on real matrices}
Let  $\bs c_j$ be the canonical column vector
 $(0,\dots, 0, \buildrel1_{}^{^{j \atop \downarrow}},0, \dots, 0)^t $,
$$I_n= \left(\begin{matrix} 1 &&0 \\
 &\ddots&\\0&&1
\end{matrix}\right)= (\bs c_1,\dots,\bs c_n)$$
 the identity matrix and 
$$\tilde I_n:= \left(\begin{matrix} -1 &&&0 \\&1&&\\  &&\ddots&\\ 0&&&1
\end{matrix}\right)=(-\bs c_1, \bs c_2,\dots, \bs c_n).$$

It is well known that the group $G_{M_n(\R)}$ of invertible real  matrices has exactly
 two components:

\begin{proposition}\label{compos}

\begin{eqnarray*}
G^+&=&\{\;M\in G_{M_n(\R)}: \det M>0\}=\big\{\prod_{j=1}^m e^{M_j}: m\in \N^*, M_j\in M_n(\R)\big\}\\
G^-&=&\{M\in G_{M_n(\R)}: \det M<0\}=\big\{\tilde I_n \prod_{j=1}^m e^{M_j}: m\in \N^*, M_j\in M_n(\R)\big\}.
\end{eqnarray*}
\end{proposition}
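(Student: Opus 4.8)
The plan is to identify the set $H:=\big\{\prod_{j=1}^m e^{M_j}: m\in\N^*,\ M_j\in M_n(\R)\big\}$ with $G^+$, and then to read off the description of $G^-$ by multiplication with $\tilde I_n$. First I would record the two cheap inclusions that come from the determinant. Since $\det e^{M}=e^{\operatorname{tr}M}$ and $\operatorname{tr}M\in\R$ whenever $M\in M_n(\R)$, every real exponential matrix has positive determinant, and hence so does any finite product of them; thus $H\ss G^+$. In the same way $\det(\tilde I_n P)=-\det P<0$ for $P\in H$, so $\tilde I_n H\ss G^-$.

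The substantial point is the reverse inclusion $G^+\ss H$, which I would obtain by a topological-group argument. One checks immediately that $H$ is a subgroup of $G_{M_n(\R)}$: it contains $I_n=e^{0}$, it is closed under products by concatenation of the factors, and it is closed under inversion because $\big(e^{M_1}\cdots e^{M_m}\big)^{-1}=e^{-M_m}\cdots e^{-M_1}$. By the first assertion of Lemma \ref{expo}, $H$ contains the ball $V:=\{f\in M_n(\R):\|I_n-f\|<1\}$, which is an open neighbourhood of $I_n$ in $G_{M_n(\R)}$. Since left translation by any $h\in H$ is a homeomorphism of $G_{M_n(\R)}$, the set $hV\ss H$ is an open neighbourhood of $h$, so $H$ is open; and an open subgroup is automatically closed, its complement being a union of open cosets. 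Hence $H$ is a nonempty clopen subset of $G_{M_n(\R)}$ contained in the connected set $G^+$ — here I invoke the quoted fact that $G^+$ and $G^-$ are precisely the two connected components of $G_{M_n(\R)}$ — which forces $H\cap G^+=G^+$, i.e. $G^+\ss H$. Together with the first paragraph this gives $H=G^+$.

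For $G^-$ I would then argue: if $\det M<0$, then $\det(\tilde I_n M)>0$, so $\tilde I_n M\in G^+=H$, and since $\tilde I_n^{2}=I_n$ we get $M=\tilde I_n(\tilde I_n M)\in\tilde I_n H$. Combined with $\tilde I_n H\ss G^-$ from the first paragraph, this yields $G^-=\tilde I_n H$, completing the proof.

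I do not expect a genuine obstacle: the only inputs from outside the excerpt are the elementary identity $\det e^{M}=e^{\operatorname{tr}M}$ and the quoted two-component structure of $G_{M_n(\R)}$. The one step worth stating with care is the standard fact that a subgroup of a topological group containing a neighbourhood of the identity is clopen — this is exactly what upgrades "exponentials fill a neighbourhood of $I_n$" (Lemma \ref{expo}) to "$H$ exhausts the whole component $G^+$."
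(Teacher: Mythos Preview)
Your argument is correct, but it follows a genuinely different route from the paper's. The paper does \emph{not} take the connectedness of $G^+$ as an input; rather, it first proves the sharper theorem that every $M$ with $\det M>0$ can be written as a product of exactly \emph{two} real exponentials, $M=e^{B_1}e^{B_2}$ (via the Schur-type reduction of Theorem~\ref{schur2} together with Theorem~\ref{real-logs}\,(2)). This gives the inclusion $G^+\ss H$ directly, and the connectedness of $G^+$ then falls out from the explicit path $t\mapsto e^{tB_1}e^{tB_2}$ in $G^+$ joining $I_n$ to $M$; the Jacobi trace formula supplies the easy inclusion $H\ss G^+$, and $G^-$ is handled by multiplying through by $\tilde I_n$, just as you do.

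Your approach reverses the logic: you assume the two-component structure of $G_{M_n(\R)}$ and deduce $G^+=H$ from the abstract fact that a subgroup containing a neighbourhood of the identity is clopen. This is slicker and works verbatim in any real Banach algebra once the identity component of the unit group is known, but it forfeits the quantitative bound $m\le 2$ and---more to the point in this paper---it treats as given precisely the ``well known'' fact that the authors announce they will reprove by elementary means.
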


 Below we shall  not only give a simple proof of this fact, but also obtain the result that every matrix in $G^+$ actually is  a product of at most {\it two} exponential matrices \footnote{ This is probably  known among the workers in the field, but we couldn't trace   it in the literature.}.
We will use a weak version of Schur's reduction theorem for matrices
(see for instance \cite{be} for a proof of the general case): 

 \begin{theorem}\label{schur2}

  If $M\in M_n(\R)$ and if  $M$ admits $k$ real eigenvalues
$\lambda_1,\dots, \lambda_k$, ($1\leq k\leq n)$, then there exists an orthogonal matrix $Q$
such that
$$Q^t M Q = \left(\begin{array}{ccc|ccc} \lambda_1 & & *&* &&*\\
&\ddots&&&&\\
0&&\lambda_k&*&&*\\
\hline0&&0&*&&*\\
\vdots&&\vdots&&&\\
0&&0&*&&*
\end{array}\right)
$$
\end{theorem}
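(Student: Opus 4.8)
The plan is to prove Theorem \ref{schur2} by induction on $n$, the size of the matrix, using nothing more than the existence of a real eigenvector for a real eigenvalue and the Gram--Schmidt orthogonalization process. The case $n=1$ (or more generally $k=0$, where there is nothing to prove) is trivial, so assume $n\geq 2$ and $k\geq 1$, and suppose the result holds for all smaller sizes.

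First I would pick a real eigenvalue $\lambda_1$ of $M$ and a corresponding real eigenvector $v_1$, normalized so that $\|v_1\|_2=1$. Extend $\{v_1\}$ to an orthonormal basis $\{v_1,\dots,v_n\}$ of $\R^n$ via Gram--Schmidt, and let $Q_1=(v_1,\dots,v_n)$ be the resulting orthogonal matrix. Since $Mv_1=\lambda_1 v_1$ and the first column of $Q_1$ is $v_1$, a direct computation shows that $Q_1^t M Q_1$ has first column equal to $\lambda_1 \bs c_1$; that is,
$$Q_1^t M Q_1 = \left(\begin{array}{c|c} \lambda_1 & w^t \\ \hline 0 & M' \end{array}\right)$$
for some $w\in\R^{n-1}$ and some $M'\in M_{n-1}(\R)$. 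The key point for the inductive step is that the remaining real eigenvalues $\lambda_2,\dots,\lambda_k$ of $M$ are real eigenvalues of $M'$: indeed, conjugation by $Q_1$ preserves the characteristic polynomial, and for a block upper-triangular matrix the characteristic polynomial factors as $(\lambda_1-\lambda)\det(M'-\lambda I_{n-1})$, so $\lambda_2,\dots,\lambda_k$ are roots of $\det(M'-\lambda I_{n-1})$. Hence $M'\in M_{n-1}(\R)$ admits the $k-1$ real eigenvalues $\lambda_2,\dots,\lambda_k$, and the induction hypothesis yields an orthogonal $Q'\in M_{n-1}(\R)$ with $(Q')^t M' Q'$ of the required upper-triangular-in-the-first-$(k-1)$-columns form.

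Finally I would assemble the conclusion: set
$$Q = Q_1 \left(\begin{array}{c|c} 1 & 0 \\ \hline 0 & Q' \end{array}\right),$$
which is a product of two orthogonal matrices, hence orthogonal. A short block computation gives
$$Q^t M Q = \left(\begin{array}{c|c} \lambda_1 & w^t Q' \\ \hline 0 & (Q')^t M' Q' \end{array}\right),$$
and since $(Q')^t M' Q'$ has its first $k-1$ columns in the stated triangular shape with diagonal entries $\lambda_2,\dots,\lambda_k$, the full matrix $Q^t M Q$ has the asserted block structure with $\lambda_1,\dots,\lambda_k$ down the first $k$ diagonal entries and zeros below them in the first $k$ columns.

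The argument is essentially routine once the reduction is set up correctly; there is no serious obstacle, only two bookkeeping points that must be handled with care. The first is verifying that the real eigenvalues genuinely ``descend'' to $M'$ with the correct multiplicities --- this rests on the factorization of the characteristic polynomial of a block triangular matrix, so one should state that multiplicities are counted consistently (if $M$ is given $k$ real eigenvalues listed with multiplicity, then $M'$ inherits $k-1$ of them). The second is the block matrix multiplication at the end, which is mechanical but needs the orthogonal matrix from the inductive step to be embedded as $\mathrm{diag}(1,Q')$ so that the $\lambda_1$ entry and the zero column below it are left untouched. I would present both of these explicitly but briefly.
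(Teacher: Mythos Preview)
Your argument is correct and is the standard induction proof of the real Schur reduction. Note, however, that the paper does not supply its own proof of Theorem~\ref{schur2}: it merely states the result and refers to \cite{be} for the general case, so there is no in-paper argument to compare against. Your write-up would serve perfectly well as the omitted proof; the two bookkeeping points you flag (descent of the remaining real eigenvalues with multiplicity to $M'$ via the block-triangular factorization of the characteristic polynomial, and the embedding of $Q'$ as $\mathrm{diag}(1,Q')$) are exactly the places where care is needed, and you handle them correctly.
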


\begin{theorem}
Every matrix $M$ in $M_n(\R)$ with $\det M>0$ is a product of two exponential matrices
in $M_n(\R)$.
\end{theorem}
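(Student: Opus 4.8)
The plan is to reduce to the case where $M$ has at least one real eigenvalue, split off a real-logarithm factor handling all the real eigenvalues together with a compensating sign, and then invoke Theorem~\ref{real-logs}(2) (equivalently Corollary~\ref{asquare}) on the remaining factor whose real-symmetric spectrum avoids $]{-\infty},0]$. Concretely: if $M$ has a real eigenvalue, apply Theorem~\ref{schur2} to conjugate $M$ by an orthogonal $Q$ into block upper-triangular form with a triangular block carrying \emph{all} the real eigenvalues $\lambda_1,\dots,\lambda_k$ on the diagonal and a lower-right block $N$ with no real eigenvalues. Since $\det M>0$ and the $\lambda_i$ are real while the remaining complex eigenvalues come in conjugate pairs with positive product, the sign of $\prod\lambda_i$ is positive, so the number of negative $\lambda_i$ is even.

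First I would treat the triangular block: a real upper-triangular matrix with nonzero diagonal entries $\lambda_1,\dots,\lambda_k$, an even number of which are negative, is a product of two real exponentials. One writes it as $D U$ with $D=\operatorname{diag}(\lambda_i)$ and $U$ unipotent upper-triangular; $U=e^{\log U}$ (the series for $\log U$ terminates), and $D$, after pairing up the negative entries into $2\times2$ diagonal blocks $\operatorname{diag}(-|\lambda_{2i-1}|,-|\lambda_{2i}|)$, is a block-diagonal matrix each of whose blocks has positive determinant and hence (being a $1\times1$ positive scalar, or a $2\times2$ diagonal matrix with both entries negative, which has $]{-\infty},0]$ in the unbounded component of the complement of its real-symmetric spectrum $\{\lambda_{2i-1},\lambda_{2i}\}$) lies in $\exp$ by Lemma~\ref{expo} or Theorem~\ref{real-logs}(2). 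Assembling, the triangular block is $e^{B_1}e^{B_2}$ for suitable real $B_1,B_2$.

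Next, the block $N$ with no real eigenvalues has $\sigma^*_{M_n(\R)}(N)$ disjoint from the real axis, so in particular $]{-\infty},0]$ lies in the unbounded component of $\C\setminus\sigma^*(N)$; by Theorem~\ref{real-logs}(2), $N=e^{C}$ for some real $C$. Patching the block decompositions together (extending each logarithm by zero in the off-block positions is not quite legitimate because of the upper-right coupling block, so instead one keeps the \emph{block upper-triangular} structure: a block upper-triangular matrix whose diagonal blocks are in $\exp$ is itself a product of a block-diagonal exponential and a unipotent-type correction — more carefully, one argues that the set of block upper-triangular matrices with both diagonal blocks a product of $\le 2$ exponentials is closed under the operations needed), and conjugating back by $Q^{-1}=Q^t$, one expresses $M$ as a product of two real exponentials. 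Finally, if $M$ has \emph{no} real eigenvalue at all, then $\sigma^*_{M_n(\R)}(M)$ already misses $]{-\infty},0]$ and Theorem~\ref{real-logs}(2) gives $M=e^C$ directly, a product of one (hence at most two) exponentials.

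The main obstacle is the bookkeeping in the block-triangular patching: turning the statement ``each diagonal block is a product of two exponentials'' into ``the whole matrix is a product of two exponentials'' requires care, since one cannot simply exponentiate a block-diagonal sum and ignore the upper-right coupling block. The clean way is to conjugate (by a real similarity, absorbed at the end) to make $M$ genuinely block-diagonal $\operatorname{diag}(T,N)$ — possible because the real spectrum of $T$ and the spectrum of $N$ are disjoint, so the Sylvester equation $TX-XN=(\text{coupling block})$ is solvable over $\R$ — after which the two exponential factors combine blockwise: $M=\operatorname{diag}(e^{B_1},e^{C})\cdot\operatorname{diag}(e^{B_2},I)$. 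Ensuring the scalars $\log M$ needed in Lemma~\ref{expo} stay real throughout, and that the even-count parity argument is airtight, are the remaining points to nail down.
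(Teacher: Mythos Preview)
There is a genuine gap. Your claim that the $2\times 2$ diagonal block $\operatorname{diag}(-|\lambda_{2i-1}|,-|\lambda_{2i}|)$ lies in $\exp M_2(\R)$ via Theorem~\ref{real-logs}(2) is wrong on both counts. First, the hypothesis of Theorem~\ref{real-logs}(2) fails: the spectrum $\{-|\lambda_{2i-1}|,-|\lambda_{2i}|\}$ sits \emph{on} $]{-\infty},0]$, so that ray is certainly not contained in the complement of the spectrum. Second, the conclusion is false: if $|\lambda_{2i-1}|\neq|\lambda_{2i}|$ then $\operatorname{diag}(-|\lambda_{2i-1}|,-|\lambda_{2i}|)$ has \emph{no} real logarithm at all. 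Indeed, any $L\in M_2(\R)$ has either two real eigenvalues (forcing $e^L$ to have positive eigenvalues) or a complex-conjugate pair $\mu,\bar\mu$ (forcing the eigenvalues $e^\mu,e^{\bar\mu}$ of $e^L$ to be complex conjugates, hence equal when both are real). This is exactly the phenomenon the paper illustrates with the $3\times3$ matrix $\operatorname{diag}(-1,-2,1)$. Consequently your diagonal factor $D$ need not lie in $\exp M_k(\R)$, and the decomposition $T=DU$ does not give two exponentials.

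The paper's argument avoids this trap by a cleaner trick: put only the \emph{negative} eigenvalues $\lambda_1,\dots,\lambda_k$ in the top-left triangular corner via Schur, then left-multiply the whole matrix $A=Q^tMQ$ by $P=\operatorname{diag}(-1,\dots,-1,1,\dots,1)$ with $k$ minus signs. Now $PA$ is still block upper-triangular but its spectrum avoids $]{-\infty},0]$ entirely, so Theorem~\ref{real-logs}(2) yields $PA=e^B$ in one shot. The other factor $P$ has a real logarithm precisely because all its negative entries are the \emph{same} number $-1$ (paired into $-I_2$ blocks with logarithm $\left(\begin{smallmatrix}0&-\pi\\ \pi&0\end{smallmatrix}\right)$), which is the case your argument cannot guarantee. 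This also removes any need for a Sylvester equation or block-diagonalisation: since $P^2=I_n$, one simply writes $M=Q(PPA)Q^t=e^{QCQ^t}e^{QBQ^t}$.
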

\begin{proof}
Let $\lambda_1, \dots, \lambda_k$ be the negative eigenvalues of $M$ (listed according to their multiplicities). If there are none, Theorem \ref{real-logs} (2) implies the existence of a real logarithm. So in this case, the second exponential is $I_n$. Thus we may assume $k \geq 1$.

By Theorem \ref{schur2} there is an orthogonal matrix $Q\in M_n(\R)$ such that
$$A:=Q^t M Q = \left(\begin{array}{ccc|ccc} \lambda_1 & & *&* &&*\\
&\ddots&&&&\\
0&&\lambda_k&*&&*\\
\hline0&&0&*&&*\\
\vdots&&\vdots&&&\\
0&&0&*&&*
\end{array}\right).
$$
Since the characteristic polynomial 
$$p_M(\lambda)=\det(M-\lambda I_n)=(-1)^n \prod_{j=1}^n (\lambda-\lambda_j)$$
 evaluated at $\lambda=0$ equals
$\det M=\prod_{j=1}^n \lambda_j$,  and since non-real zeros  occur in pairs
 $(\lambda, \ov \lambda)$,  we conclude that the number of negative eigenvalues
 of $M$ is  even. Let $P$ be the diagonal matrix
 $$P=(-\bs c_1,\dots, -\bs c_k, \bs c_{k+1},\dots \bs c_n),$$
 where, as we know, $k\in 2\N$.
 Then 
$$P A= \left(\begin{array}{ccc|ccc} |\lambda_1| & & *&* &&*\\
&\ddots&&&&\\
0&&|\lambda_k|&*&&*\\
\hline0&&0&*&&*\\
\vdots&&\vdots&&&\\
0&&0&*&&*
\end{array}\right),
$$
and the eigenvalues of $PA$ are $|\lambda_1|,\dots, |\lambda_k|, \lambda_{k+1},\dots, \lambda _n$.
Note that the lower right block of $A$ did not change.   Thus $\sigma(PA)\;\inter \;]-\infty, 0]=\emp$.
By Theorem \ref{real-logs} (2),  $PA=e^B$ for some $B\in M_n(\R)$. Moreover, 
it is easy see that $P$ has a real logarithm, too, say $P=e^C$
(just use the block-matrix structure and the formula  
$$\exp \left( \begin{matrix} 0 &-t\\ t & 0 \end{matrix}\right)= \left(\begin{matrix} \cos t& -\sin t\\ \sin t& \cos t\end{matrix}\right)$$
and put $t=\pi$, respectively $t=0$).
Hence, by noticing that $Q^tQ=I_n$ and $PP=I_n$, 
$$M= Q A Q^t= Q(PPA)Q^t= Q(e^C e^B)Q^t=e^{QCQ^t} e^{QBQ^t}.$$

\end{proof}

\begin{corollary}
If $M\in M_n(\R)$ has a negative determinant, then there are $B_1,B_2\in M_n(\R)$
such that $M=\tilde I_n e^{B_1}e^{B_2}$.
\end{corollary}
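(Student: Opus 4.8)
The plan is to reduce the determinant-negative case to the determinant-positive case already handled by the preceding theorem. Given $M\in M_n(\R)$ with $\det M<0$, the key observation is that $\tilde I_n$ is invertible with $\det \tilde I_n=-1$ and $\tilde I_n^2=I_n$, so $N:=\tilde I_n M$ satisfies $\det N=(-1)\det M>0$. Hence $N$ lies in $G^+$, and by the previous theorem there exist $B_1,B_2\in M_n(\R)$ with $N=e^{B_1}e^{B_2}$.

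From here I would simply multiply on the left by $\tilde I_n$ and use $\tilde I_n^2=I_n$:
$$M = \tilde I_n\,\tilde I_n\,M = \tilde I_n\,N = \tilde I_n\,e^{B_1}e^{B_2},$$
which is exactly the asserted form. This completes the argument; the proof is essentially a one-line corollary.

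There is no real obstacle here — the only thing to verify is the elementary determinant computation $\det(\tilde I_n M)>0$ and the identity $\tilde I_n^2=I_n$, both immediate from the diagonal block structure of $\tilde I_n$ given above. If desired, one can also note in passing that combining this corollary with the previous theorem yields Proposition \ref{compos}: every element of $G^+$ is a (finite, in fact length-two) product of exponentials, and every element of $G^-$ is $\tilde I_n$ times such a product, while conversely each such product has positive determinant (since $\det e^{B}=e^{\operatorname{tr}B}>0$) and $\tilde I_n$ times it has negative determinant, so the two descriptions partition $G_{M_n(\R)}$ into its two connected components.

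\begin{proof}
Since $\det M<0$ and $\det\tilde I_n=-1$, the matrix $N:=\tilde I_n M$ is invertible with $\det N=(-1)\cdot\det M>0$. By the previous theorem there exist $B_1,B_2\in M_n(\R)$ with $N=e^{B_1}e^{B_2}$. From the block structure of $\tilde I_n$ one reads off $\tilde I_n^2=I_n$, hence
$$M=\tilde I_n\,\tilde I_n\,M=\tilde I_n\,N=\tilde I_n\,e^{B_1}e^{B_2}.$$
\end{proof}
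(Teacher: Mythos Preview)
Your proof is correct and follows exactly the same approach as the paper: define $\tilde M:=\tilde I_n M$, observe that $\det\tilde M>0$, apply the preceding theorem, and then multiply back by $\tilde I_n$. The paper's proof is even terser than yours, but the argument is identical.
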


\begin{proof}
Just consider the matrix $\tilde M:= \tilde I_n M$, for which  $\det \tilde M>0$ and use the previous
Theorem.
\end{proof}

Proposition \ref{compos} now immediately follows since $t\mapsto e^{tB_1}e^{tB_2}$
is a path in $G^+$  joining $I_n$ with $M$ (and similarily for $G^-$).
Just note that by Jacobi's trace formula $\det e^A=e^{{\rm trace}\,A}>0$ for $A\in M_n(\R)$.
\bigskip

As a corollary to Theorem \ref{real-logs} (or  as a special case to Corollary \ref{asquare}), we obtain the following  result:

\begin{corollary}\label{square}
A real matrix $M\in M_n(\R)$ has a logarithm if and only if $M=A^2$ for some $A\in M_n(\R)$.
\end{corollary}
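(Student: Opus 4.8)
The plan is to deduce Corollary \ref{square} directly from Corollary \ref{asquare}, since a matrix $M \in M_n(\R)$ is precisely an element of the unital real Banach algebra $\mathcal R = M_n(\R)$, and Corollary \ref{asquare} already gives the equivalence ``$x \in \exp \mathcal R$ iff $x = y^2$'' under the hypothesis that $0$ lies in the unbounded component of $\C \setminus \sigma^*_{\mathcal R}(x)$. So the only thing to verify is that this spectral hypothesis is automatic for matrices. First I would recall that the real-symmetric spectrum $\sigma^*_{M_n(\R)}(M)$ coincides with the ordinary set of complex eigenvalues of $M$: indeed $\lambda \in \sigma^*_{M_n(\R)}(M)$ means $(M - \lambda I_n)(M - \ov\lambda I_n)$ is not invertible in $M_n(\R)$, equivalently not invertible in $M_n(\C)$, equivalently $\lambda$ or $\ov\lambda$ is an eigenvalue of $M$; but the characteristic polynomial has real coefficients, so $\lambda$ is an eigenvalue iff $\ov\lambda$ is. Hence $\sigma^*_{M_n(\R)}(M)$ is a finite subset of $\C$.

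Since $\sigma^*_{M_n(\R)}(M)$ is finite, its complement $\C \setminus \sigma^*_{M_n(\R)}(M)$ has exactly one connected component, which is therefore unbounded and contains $0$. Thus the hypothesis of Corollary \ref{asquare} is satisfied unconditionally for $x = M$, and the conclusion ``$M \in \exp M_n(\R)$ iff $M = A^2$ for some $A \in M_n(\R)$'' follows at once. I would write this up as a two- or three-line proof, essentially: observe $\sigma^*$ is finite, apply Corollary \ref{asquare}.

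There is essentially no obstacle here; the statement is a clean specialization. The only point requiring a word of care is the identification of $\sigma^*_{M_n(\R)}$ with the eigenvalue set (and in particular its finiteness), which is where the hypothesis of Corollary \ref{asquare} gets discharged; everything else is quotation. One could alternatively give a self-contained argument bypassing Corollary \ref{asquare}: if $M = A^2$ then $\sigma(A) \cap \,]-\infty,0] $ may be nonempty, but one rotates the square-root branch as in the proof of Corollary \ref{asquare} to route an arc from $0$ to $\infty$ around $\sigma(A)$, then invokes Theorem \ref{real-logs}(1); the converse direction is trivial ($M = e^L$ gives $M = (e^{L/2})^2$). But since Corollary \ref{asquare} is already available, the direct deduction is cleanest and is the route I would take.

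\begin{proof}
Take $\mathcal R = M_n(\R)$ and $x = M$ in Corollary \ref{asquare}. For a matrix $M$, the real-symmetric spectrum $\sigma^*_{M_n(\R)}(M)$ is finite: $\lambda$ lies in it exactly when $(M-\lambda I_n)(M-\ov\lambda I_n)$ fails to be invertible, i.e.\ when $\lambda$ or $\ov\lambda$ is a (complex) eigenvalue of $M$, and since the characteristic polynomial of $M$ has real coefficients these two conditions are equivalent; hence $\sigma^*_{M_n(\R)}(M)$ is the (finite) set of complex eigenvalues of $M$. Consequently $\C \setminus \sigma^*_{M_n(\R)}(M)$ is connected and unbounded, so in particular $0$ belongs to its unbounded connected component. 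The hypothesis of Corollary \ref{asquare} is therefore fulfilled, and the stated equivalence follows.
\end{proof}
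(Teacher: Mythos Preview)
Your proof is correct and is precisely the route the paper itself flags when it introduces the corollary (``or as a special case to Corollary \ref{asquare}''): you specialize Corollary \ref{asquare} to $\mathcal R=M_n(\R)$, discharging the spectral hypothesis by the finiteness of $\sigma^*_{M_n(\R)}(M)$. The paper's written proof instead applies Theorem \ref{real-logs}(1) directly to $A$ (using that $\sigma^*_{M_n(\R)}(A)$ is finite) for the ``if'' direction, but this is the same idea one step earlier in the chain; both arguments share the tacit assumption that the matrix in question is invertible so that $0$ actually lies in the complement of the spectrum.
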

\begin{proof}
If $M=e^B$ for some $B\in M_n(\R)$, then  $A:=e^{B/2}$ has the desired property that
$M=A^2$. On the other hand, if  $M=A^2$, then by Theorem \ref{real-logs}, $A^2$ has a logarithm
(note that the real-symmetric spectrum $\sigma^*_{M_n(\R)}(A)$ is finite and so it obviously
satisfies the assumptions of that Theorem).
\end{proof}

As we have seen, if $A\in M_n(\R)$ with $ \det A>0$, then $A=e^{B_1}e^{B_1}$ 
where $B_j\in M_n(\R)$,
but $A$ does not necessarily belong to $\exp M_n(\R)$. The following result, though, shows
that each $A\in G_{M_n(\R)} $ comes very close to admit a real logarithm.

\begin{corollary}\label{doppelblock}
 Let $A$ be an invertible matrix in $M_n(\R)$.
Then  the $(2n)\times (2n)$-matrix 
$$M:=\left(\begin{matrix} A& O\\ O& A\end{matrix}\right)$$
has  a logarithm in $M_{2n}(\R)$.
\end{corollary}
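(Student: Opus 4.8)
The plan is to reduce the claim to Corollary \ref{square}, which says that a real matrix has a logarithm if and only if it is a square. So it suffices to exhibit a matrix $N\in M_{2n}(\R)$ with $N^2=M$. The obvious candidate is a block-antidiagonal matrix
$$N=\left(\begin{matrix} O & -A\\ A & O\end{matrix}\right),$$
for which one computes $N^2=\left(\begin{smallmatrix} -A^2 & O\\ O& -A^2\end{smallmatrix}\right)$, which is not quite $M$. The point is that $N$ should be built so that $N^2$ reproduces $A$ in the diagonal blocks, and the natural way to achieve this is to use a square root of $A$ inside the antidiagonal construction; but $A$ need not have a real square root. The resolution is to use $A$ itself but \emph{twisted}: take
$$N=\left(\begin{matrix} O & -I_n\\ A & O\end{matrix}\right),$$
whence $N^2=\left(\begin{smallmatrix} -A & O\\ O & -A\end{smallmatrix}\right)=-M$. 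This shows $-M$ is a square, hence (by Corollary \ref{square}) has a real logarithm; but we want $M$, not $-M$.

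So the real idea is different: I would directly exhibit a real logarithm of $M$ by a rotation-type formula. Since $M$ is block-scalar in $A$, any function of the form
$$L=\left(\begin{matrix} P & -Q\\ Q & P\end{matrix}\right)$$
with $P,Q$ polynomials in $A$ (so that all four blocks commute) will satisfy, by the standard $2\times2$ rotation identity,
$$e^{L}=e^{P}\left(\begin{matrix}\cos Q & -\sin Q\\ \sin Q & \cos Q\end{matrix}\right)$$
where $\cos Q,\sin Q$ are defined by the power series in the commutative algebra $[A]_{\rm alg}$. For this to equal $\left(\begin{smallmatrix}A&O\\O&A\end{smallmatrix}\right)$ we would need $e^{P}\cos Q=A$ and $e^{P}\sin Q=O$, forcing $\sin Q=0$ and $\cos Q=\pm I_n$, which only works when $A\in\exp M_n(\R)$ — the very thing that may fail. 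This confirms that no logarithm of $M$ can be block-scalar; one must genuinely mix the two copies of $\R^n$, which is exactly what makes the doubled matrix work where the single matrix does not.

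Here is the approach I would actually carry out. Apply Theorem \ref{real-logs}(2)-style reasoning together with the observation, already used in the proof of the two-exponentials theorem, that the obstruction to a real logarithm is entirely carried by the Jordan blocks at negative eigenvalues. By Theorem \ref{schur2} reduce (via an orthogonal conjugation, which survives the doubling) to the block-triangular form $A'=Q^tAQ$ with the $k$ negative real eigenvalues $\lambda_1,\dots,\lambda_k$ on the diagonal of the top-left block and no nonpositive reals in the spectrum of the lower-right block. Doubling gives $\left(\begin{smallmatrix}A'&O\\O&A'\end{smallmatrix}\right)$, and after a permutation of coordinates this is conjugate to a direct sum of the $2\times2$ blocks $\lambda_j I_2$ (for the negative eigenvalues, each now appearing an \emph{even} number of times — twice as often as before) together with a block whose spectrum avoids $]{-\infty},0]$. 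The block avoiding $]{-\infty},0]$ has a real logarithm by Theorem \ref{real-logs}(2); and each $\lambda_j I_2=|\lambda_j|\left(\begin{smallmatrix}-1&0\\0&-1\end{smallmatrix}\right)$ has the explicit real logarithm $\log|\lambda_j|\cdot I_2+\left(\begin{smallmatrix}0&-\pi\\ \pi&0\end{smallmatrix}\right)$, using $\exp\left(\begin{smallmatrix}0&-\pi\\ \pi&0\end{smallmatrix}\right)=-I_2$. Assembling these logarithms blockwise and conjugating back yields the desired $L\in M_{2n}(\R)$ with $e^L=M$.

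The main obstacle is organizational rather than deep: one must check carefully that doubling $A$ and then permuting coordinates really does turn the problematic single copies of each negative-eigenvalue Jordan block into pairs, so that the criterion of \cite{cu} (every Jordan block at a negative eigenvalue occurs an even number of times) is met — equivalently, so that the ad hoc block decomposition above goes through. Once that bookkeeping is in place, the existence of the logarithm on each piece is immediate from Theorem \ref{real-logs}(2) and the elementary rotation formula $\exp\left(\begin{smallmatrix}0&-t\\ t&0\end{smallmatrix}\right)=\left(\begin{smallmatrix}\cos t&-\sin t\\ \sin t&\cos t\end{smallmatrix}\right)$ already invoked in the paper. Alternatively, and perhaps most cleanly for an exposition that wishes to avoid Jordan form entirely, one can instead route through Corollary \ref{square} by proving the purely algebraic fact that the doubled matrix $\left(\begin{smallmatrix}A&O\\O&A\end{smallmatrix}\right)$ is always a square in $M_{2n}(\R)$ — which, after the orthogonal reduction above, again comes down to checking it block by block, the only nontrivial block being $\lambda I_2$ with $\lambda<0$, where $\sqrt{|\lambda|}\left(\begin{smallmatrix}0&-1\\1&0\end{smallmatrix}\right)$ squares to $-|\lambda|I_2=\lambda I_2$.
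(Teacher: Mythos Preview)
Your Schur-based argument has a gap at the ``direct sum'' step. Theorem~\ref{schur2} gives only a block upper-triangular form $A'=\left(\begin{smallmatrix}T&S\\0&B\end{smallmatrix}\right)$, where $T$ is itself merely upper triangular (not diagonal) on the negative eigenvalues and the off-diagonal block $S$ need not vanish. Doubling and permuting coordinates therefore yields a block upper-triangular matrix, not a direct sum: already for $A=\left(\begin{smallmatrix}-1&1\\0&-1\end{smallmatrix}\right)$ the permuted doubled matrix is $\left(\begin{smallmatrix}-I_2&I_2\\0&-I_2\end{smallmatrix}\right)$, not $(-I_2)\oplus(-I_2)$. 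Hence you cannot build a logarithm (or a square root) ``block by block''; controlling the superdiagonal blocks requires a Sylvester-type argument that is delicate exactly when the diagonal blocks share eigenvalues, as they do here. The same objection applies to the alternative route through Corollary~\ref{square} that you sketch at the end.

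The paper's proof is essentially the idea you raised and then abandoned. Let $H\in M_n(\C)$ be any complex logarithm of $A$ and write $e^{H/2}=C+iD$ with $C,D\in M_n(\R)$; since $(C+iD)^2=e^H=A$ is real, one gets $CD+DC=0$ and $C^2-D^2=A$, so $\left(\begin{smallmatrix}C&-D\\D&C\end{smallmatrix}\right)^2=M$, and Corollary~\ref{square} finishes. The mechanism behind this is that $Z\mapsto\left(\begin{smallmatrix}\mathrm{Re}\,Z&-\mathrm{Im}\,Z\\\mathrm{Im}\,Z&\mathrm{Re}\,Z\end{smallmatrix}\right)$ is a unital $\R$-algebra homomorphism $M_n(\C)\to M_{2n}(\R)$ carrying the real matrix $A$ to $M$; no commutativity between the real and imaginary parts is needed, so the obstacle you hit with your rotation ansatz (which assumed $P,Q$ commute in order to use the $e^P\cos Q$ formula) simply disappears once one works with an arbitrary complex function of $A$ rather than a real polynomial in $A$.
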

\begin{proof}
First we note that $\det M=(\det A)^2\not=0$. Hence $M$ is invertible in $M_{2n}(\R)$.
Let $H\in M_n(\C)$ be a complex logarithm of $A$. Put $C:= {\rm Re}\; e^{H/2}$ and 
$D:={\rm Im}\;e^{H/2}$.  Using Corollary \ref{square}, it suffices to show  that
$$\left(\begin{matrix} A& O\\ O& A\end{matrix}\right)=\left(\begin{matrix} C &  -D\\
D & C\end{matrix}\right)^2.$$
Note that
$$\left(\begin{matrix} C &  -D\\ D & C\end{matrix}\right)^2=\left(\begin{matrix} C^2-D^2 &  -CD-DC\\  DC+CD& C^2-D^2\end{matrix}\right).$$
Using that $C= (1/2)( e^{H/2} + e^{\ov H/2})$ and $ D= (-i/2) ( e^{H/2} - e^{\ov H/2})$,
we see that $CD+DC=0$ and 
$$C^2-D^2=(C+iD)^2= (e^{H/2})^2=e^H=A.\eqno\qedhere$$
\end{proof}

{\bf Acknowledgement.} We thank Amol Sasane for providing us with reference  \cite{cu}.

\bibliographystyle{amsplain}

\end{document}